\title{On the Smallest Non-trivial Action of $\operatorname{SAut}(F_n)$ for Small $n$}
\author{Reemon Spector}
\affil{Hertford College, University of Oxford}
\date{Summer 2021}
\begin{document}

\theoremstyle{plain}
    \newtheorem{prop}{Proposition}[section]
    \newtheorem{lem}[prop]{Lemma}
    \newtheorem{cor}[prop]{Corollary}
    \newtheorem{thm}[prop]{Theorem}

\theoremstyle{definition}
    \newtheorem{defn}[prop]{Definition}
    \newtheorem{rem}[prop]{Remark}
    \newtheorem{ques}[prop]{Question}

\maketitle

\textbf{Abstract.} In this paper we investigate actions of $\operatorname{SAut}(F_n)$, the unique index $2$ subgroup of $\operatorname{Aut}(F_n)$, on small sets, improving upon results by Baumeister--Kielak--Pierro for several small values of $n$. Using a computational approach for $n \geqslant 5$, we show that every action of $\operatorname{SAut}(F_n)$ on a set containing fewer than $18$ elements is trivial.

\section{Introduction}
The actions of $\operatorname{Aut}(F_n)$, the automorphism group of the free group of rank $n$, on sets of small cardinality are not yet very well understood. In particular, the question of the cardinality of the smallest set, $\mathrm{X}$, that $\operatorname{SAut}(F_n)$ acts on nontrivially remains open.

In their paper on non-abelian quotients of $\operatorname{Aut}(F_n)$ \cite[Theorem 3.16]{BKP}, Bau\-meister, Kielak, and Pierro give a general lower bound for $|\mathrm{X}|$ when $n \geqslant 7$, as well as specific bounds for $n \in \{3,4,5,6\}$, with the bounds being sharp when $n \in \{3,4\}$. Further, using a result from Saunders' paper on permutation degrees for Coxeter groups \cite[Theorem 2.3]{Sau}, we can extract a bound of $2n$ for $n \geqslant 3$, which is a greater lower bound for the cases $n \in \{7,8\}$ than the one given in \cite[Theorem 3.16]{BKP}.

Using a computational proof by exhaustion, we prove \hyperref[n=5]{Theorem 3.10}, which states that any action of $\operatorname{SAut}(F_5)$ on a set containing fewer than $18$ elements is trivial. The best previous bound for $n = 5$ was $12$. \hyperref[n>5]{Corollary 3.12} then extends this result of $18$ to all $n \geqslant 5$, which improves on the previous bounds of $14$, $14$, $16$ for $n = 6, 7, 8$ respectively. Currently, the smallest known action coincides with the smallest action of $\operatorname{PSL_n}(\mathbb{Z}/2\mathbb{Z})$, which is on $2^n-1$ points for non-exceptional values of $n$ \cite{BKP}.

For a complete picture of the smallest actions of $\operatorname{SAut}(F_n)$, sharp results for the two cases $n \in \{1,2\}$ are proved in \hyperref[n=1]{Proposition 3.1} which states that $\operatorname{SAut}(F_1)$ is trivial, and therefore acts trivially on every set; and \hyperref[n=2]{Proposition 3.2} which states that $\operatorname{SAut}(F_2)$ has a nontrivial action on a set of two elements.

\textbf{Acknowledgements.} The author wishes to thank Dawid Kielak for his invaluable supervision, Hertford College and the Crankstart Bursary for their generous funding, the Oxford Mathematical Institute for access to powerful machinery, and Andres Klene-Sanchez for many a meaningful conversation.

\section{Background}
\subsection{\texorpdfstring{Subgroups of $\operatorname{SAut}(F_n)$}{Subgroups of SAut(Fn)}}
\label{subgroups}
We will start by constructing a few automorphisms of $F_n$, the free group of rank $n$, and the finite subgroups of $\operatorname{Aut}(F_n)$ generated by these automorphisms. First, pick an ordered basis $\mathrm{A} = \{a_1, a_2, \dots, a_n\}$ for $F_n$, and define $\mathrm{N} = \{1,2,\dots,n\}$.

For each pair of distinct $i,j \in \mathrm{N}$, define the \textit{transposition} $\sigma_{ij} \colon F_n \to F_n$ as
\[ \sigma_{ij}(a_k) = 
\begin{cases}
    a_j & \text{if }k = i,\\
    a_i & \text{if }k = j,\\
    a_k & \text{otherwise.}
\end{cases} \]
The transpositions $\sigma_{ij}$ generate a symmetric group $\mathrm{S}_n$ in $\operatorname{Aut}(F_n)$, which acts naturally on the labels of basis elements in $\mathrm{A}$.

In a similar fashion, for each $i \in \mathrm{N}$, define the \textit{involution} $\varepsilon_i \colon F_n \to F_n$ as
\[ \varepsilon_i(a_k) = 
\begin{cases}
    a_i^{-1} & \text{if }k = i,\\
    a_k & \text{otherwise.}
\end{cases} \]
For each pair of distinct $i,j \in \mathrm{N}$, the involutions $\varepsilon_{i}$ and $\varepsilon_{j}$ commute, and hence generate a copy of $(\mathbb{Z}/2\mathbb{Z})^n$ in $\operatorname{Aut}(F_n)$. Observe the conjugation action of $\mathrm{S}_n$ on $(\mathbb{Z}/2\mathbb{Z})^n$ naturally permutes the indices of the involutions, so preserves $(\mathbb{Z}/2\mathbb{Z})^n$. In fact, since $(\mathbb{Z}/2\mathbb{Z})^n$ intersects $\mathrm{S}_n$ trivially, this gives us a wreath product $(\mathbb{Z}/2\mathbb{Z})$ $\wr$ $\mathrm{S}_n$ in $\operatorname{Aut}(F_n)$. We will denote this group by $\mathrm{B}_n$ as it is isomorphic to the Coxeter group of type $B$ of rank $n$.

Next, we turn to the standard representation of $\mathrm{S}_{n+1}$ in $\operatorname{Aut}(F_n)$, which we can generate with the transpositions $\sigma_{ij}$ for $i,j \in \mathrm{N} \cup \{n+1\}$ where, for each $i \in \mathrm{N}$, the new transposition $\sigma_{i(n+1)} \colon F_n \to F_n$ is defined as
\[ \sigma_{i(n+1)}(a_k) = 
\begin{cases}
    a_i^{-1} & \text{if }k = i,\\
    a_ka_i^{-1} & \text{otherwise.}
\end{cases} \]

Observe that abelianizing $F_n$ induces a map $\phi \colon \operatorname{Aut}(F_n) \twoheadrightarrow \operatorname{GL}_n(\mathbb{Z})$, which we will refer to as the \textit{canonical map}. By associating the basis $\mathrm{A}$ with the standard basis for $\mathbb{Z}^n$, we see that
\begin{itemize}
    \item $\sigma_{ij}$ maps to the identity matrix with rows $i$ and $j$ swapped;
    \item $\varepsilon_i$ maps to the identity matrix with a $-1$ in position $(i,i)$; and
    \item $\sigma_{i(n+1)}$ maps to the identity matrix where row $i$ has all $-1$s.
\end{itemize}

With this perspective, it is natural to define $\operatorname{SAut}(F_n)$ as the kernel of the determinant map $\operatorname{det} \colon \operatorname{GL}_n(\mathbb{Z}) \xrightarrow[]{\operatorname{det}} \{\pm1\}$ composed with the canonical map $\phi$ from $\operatorname{Aut}(F_n)$.

We can now see $\mathrm{A}_n$, $\mathrm{A}_{n+1}$ and $(\mathbb{Z}/2\mathbb{Z})^{n-1}$ as subgroups of $\operatorname{SAut}(F_n)$, where $\mathrm{A}_{k} = \mathrm{S}_{k} \cap \operatorname{SAut}(F_n)$ and $(\mathbb{Z}/2\mathbb{Z})^{n-1} = (\mathbb{Z}/2\mathbb{Z})^n \cap \operatorname{SAut}(F_n)$. In addition, since $(\mathbb{Z}/2\mathbb{Z})^{n-1}$ is generated by pairs of involutions $\varepsilon_{i}\varepsilon_{j}$ and $\mathrm{A}_n$ acts naturally on the indices $i$ and $j$, we also see that $(\mathbb{Z}/2\mathbb{Z})^{n-1} \rtimes \mathrm{A}_n \leqslant \operatorname{SAut}(F_n)$. We will denote this subgroup by $\mathrm{D}_n'$, as it is isomorphic to the derived subgroup of the Coxeter group of type $D$ of rank $n$. Note that $\mathrm{D}_n'$ and $\mathrm{B}_n$ act naturally on a set of $2n$ points (or $n$ signed points).

\subsection{\texorpdfstring{Gersten's Presentation of $\operatorname{SAut}(F_n)$}{Gersten's Presentation of SAut(Fn)}}
\label{gersten}
A key tool used throughout the paper is the following concise presentation of $\operatorname{SAut}(F_n)$. Following the convention in \cite{BKP}, for $n \geqslant 2$ and distinct $i,j \in \mathrm{N}$, we define the \textit{right transvection} $\rho_{ij} \colon F_n \to F_n$, and its analogous \textit{left transvection} $\lambda_{ij} \colon F_n \to F_n$ as
\[\rho_{ij}(a_k) = 
\begin{cases}
    a_ia_j & \text{if }k = i,\\
    a_k & \text{otherwise.}
\end{cases} \]
\[\lambda_{ij}(a_k) = 
\begin{cases}
    a_ja_i & \text{if }k = i,\\
    a_k & \text{otherwise.}
\end{cases} \]

In \cite{Ger}, Gersten proves that for $n \geqslant 3$ these transvections generate $\operatorname{SAut}(F_n)$ subject to four families of relations. We formulate Gersten's relations as below noting that each $\pm1$ can be taken to be either $1$ or $-1$ independently of the choice of the other.
 
Let $i,j,k \in \mathrm{N}$ be distinct, let $l \in \mathrm{N}$ be distinct from $i,k$, and define the commutator of $a$ and $b$ as $[a,b] = aba^{-1}b^{-1}$; then
\begin{itemize}
    \item[$(\mathrm{r}_1)$] $[\rho_{ij}^{\pm 1},\rho_{kl}^{\pm 1}] = [\lambda_{ij}^{\pm 1},\lambda_{kl}^{\pm 1}] = [\rho_{ij}^{\pm 1},\lambda_{kl}^{\pm 1}] = 1$;
    \item[$(\mathrm{r}_2)$] $[\rho_{ij}^{-1},\rho_{jk}^{-1}] = \rho_{ik}^{-1}$ and $[\lambda_{ij}^{-1},\lambda_{jk}^{-1}] = \lambda_{ik}^{-1}$;
    \item[$(\mathrm{r}_3)$] $[\rho_{ij}^{\pm 1},\lambda_{il}^{\pm 1}] = 1$; and
    \item[$(\mathrm{r}_4)$] $(\lambda_{ij} \lambda_{ji}^{-1} \rho_{ij})^4 = (\rho_{ij} \rho_{ji}^{-1} \lambda_{ij})^4 = 1$.
\end{itemize} 
Note that $(\mathrm{r}_4)$ can be demystified by observing that $\lambda_{ij} \lambda_{ji}^{-1} \rho_{ij} = \varepsilon_i\sigma_{ij}$. We also note that, as a consequence of the commutator relations above, $\operatorname{SAut}(F_n)$ is perfect for all $n \geqslant 3$.

\section{Results}
We preface the main results of the paper with a well-known result about $\operatorname{SAut}(F_1)$.
\begin{prop} $(n=1).$
    \label{n=1}
    $\operatorname{SAut}(F_1)$ is trivial.
\end{prop}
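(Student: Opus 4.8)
The plan is to unwind the definitions, since for $n = 1$ everything collapses to an elementary computation with $\operatorname{Aut}(\mathbb{Z})$. First I would identify $F_1$ with the infinite cyclic group $\langle a_1 \rangle \cong \mathbb{Z}$. An endomorphism of $\mathbb{Z}$ is determined by the image of $a_1$, and it is an automorphism precisely when that image is a generator, i.e. $a_1$ or $a_1^{-1}$. Hence $\operatorname{Aut}(F_1)$ has order $2$, with the non-identity element being the inversion map $a_1 \mapsto a_1^{-1}$, which in the notation of Section~\ref{subgroups} is exactly $\varepsilon_1$.

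Next I would apply the canonical map $\phi \colon \operatorname{Aut}(F_1) \twoheadrightarrow \operatorname{GL}_1(\mathbb{Z})$. Under the identification of $a_1$ with the standard basis vector of $\mathbb{Z}^1$, the inversion $\varepsilon_1$ maps to the $1 \times 1$ matrix $(-1)$, in agreement with the general description of $\varepsilon_i$ as the identity matrix with a $-1$ in position $(i,i)$. Composing with $\operatorname{det} \colon \operatorname{GL}_1(\mathbb{Z}) \to \{\pm 1\}$ — which is the identity in rank $1$ — the non-trivial element of $\operatorname{Aut}(F_1)$ is sent to $-1$. Therefore $\operatorname{det} \circ \phi$ is injective on $\operatorname{Aut}(F_1)$, and its kernel, which is by definition $\operatorname{SAut}(F_1)$, is trivial.

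There is essentially no obstacle here; the only point requiring a word of care is that Gersten's presentation and the transvections $\rho_{ij}, \lambda_{ij}$ of Section~\ref{gersten} are only defined for $n \geqslant 2$ (indeed $n \geqslant 3$ for the presentation), so the argument must proceed directly from the definition of $\operatorname{SAut}(F_n)$ as $\ker(\operatorname{det} \circ \phi)$ rather than from generators and relations. Since the statement "$\operatorname{SAut}(F_1)$ is trivial, and so acts trivially on every set" is then immediate, I would keep the proof to two or three lines.
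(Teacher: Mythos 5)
Your proposal is correct and follows essentially the same route as the paper: identify $F_1 \cong \mathbb{Z}$, observe that $\operatorname{Aut}(\mathbb{Z})$ consists only of the identity and the inversion $a_1 \mapsto a_1^{-1}$, and note that the latter has determinant $-1$, so the kernel of $\det \circ\, \phi$ is trivial. The additional remark about Gersten's presentation not applying for $n=1$ is a sensible clarification but does not change the argument.
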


\begin{proof}
    Pick a generating element $a_1$ for $F_1$. The homomorphism $\varphi \colon F_1 \to \mathbb{Z}$ given by $\varphi(a_1) = 1$ is bijective, showing that $F_1 \cong \mathbb{Z}$ and hence that $\operatorname{Aut}(\mathbb{Z})$ consists of
    \begin{itemize}
        \item the trivial automorphism ($1 \mapsto 1$); and
        \item the negation automorphism ($1 \mapsto -1$), which has determinant $-1$.
    \end{itemize}
    Hence $\operatorname{SAut}(\mathbb{Z}) \cong \operatorname{SAut}(F_1)$ is trivial.
\end{proof}

Proposition 3.1 has the immediate corollary that $\operatorname{SAut}(F_1)$ acts trivially on any set.

\begin{prop} $(n=2).$
    \label{n=2}
    The smallest nontrivial action of $\operatorname{SAut}(F_2)$ is on a set of two elements.
\end{prop}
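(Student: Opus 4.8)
The plan is to split the statement into the (immediate) lower bound and a construction. Any group action on a set of cardinality at most one is automatically trivial, because the symmetric group on such a set is trivial; so the only real content is to exhibit a \emph{nontrivial} action of $\operatorname{SAut}(F_2)$ on a two-element set, i.e. a nontrivial — and hence surjective — homomorphism $\psi \colon \operatorname{SAut}(F_2) \to \mathbb{Z}/2\mathbb{Z}$. Note this is exactly the place where $n=2$ differs from $n \geqslant 3$: for $n \geqslant 3$ the group $\operatorname{SAut}(F_n)$ is perfect (as remarked after Gersten's relations), so admits no such $\psi$.

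First I would build $\psi$ by reducing the canonical map modulo $2$. The canonical map $\phi \colon \operatorname{Aut}(F_2) \to \operatorname{GL}_2(\mathbb{Z})$ is surjective (the images of the Nielsen generators listed in Section~\ref{subgroups} already generate $\operatorname{GL}_2(\mathbb{Z})$), and since $\operatorname{SAut}(F_2)$ is by definition $\phi^{-1}$ of $\ker(\operatorname{det})$, the map $\phi$ restricts to a surjection $\operatorname{SAut}(F_2) \twoheadrightarrow \operatorname{SL}_2(\mathbb{Z})$. Composing with reduction mod $2$ gives a map to $\operatorname{SL}_2(\mathbb{Z}/2\mathbb{Z})$; as every nonzero residue mod $2$ is a unit, $\operatorname{SL}_2(\mathbb{Z}/2\mathbb{Z}) = \operatorname{GL}_2(\mathbb{Z}/2\mathbb{Z}) \cong \mathrm{S}_3$, acting on the three nonzero vectors of $(\mathbb{Z}/2\mathbb{Z})^2$, and post-composing with the sign homomorphism $\mathrm{S}_3 \to \mathbb{Z}/2\mathbb{Z}$ produces the desired $\psi \colon \operatorname{SAut}(F_2) \to \mathbb{Z}/2\mathbb{Z}$.

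Next I would confirm $\psi$ is surjective. Abstractly this is clear, since $\operatorname{SL}_2(\mathbb{Z}) \to \operatorname{SL}_2(\mathbb{Z}/2\mathbb{Z})$ and $\mathrm{S}_3 \to \mathbb{Z}/2\mathbb{Z}$ are both surjective, so $\psi$ is a composite of surjections; but I would also pin it down by a one-line computation on a generator, e.g. $\rho_{12} \in \operatorname{SAut}(F_2)$ maps under $\phi$ to an elementary (transvection) matrix of $\operatorname{SL}_2(\mathbb{Z})$, whose reduction mod $2$ is a non-identity matrix squaring to the identity, hence a transposition in $\mathrm{S}_3$, so $\psi(\rho_{12})$ is the nontrivial element of $\mathbb{Z}/2\mathbb{Z}$. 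Identifying $\mathbb{Z}/2\mathbb{Z}$ with the symmetric group on $\{1,2\}$, $\psi$ \emph{is} a nontrivial action of $\operatorname{SAut}(F_2)$ on a two-element set; combined with the first paragraph, this shows the smallest nontrivial action has exactly two elements.

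There is no genuinely hard step here; the only care needed is bookkeeping of surjectivity along the chain $\operatorname{SAut}(F_2) \twoheadrightarrow \operatorname{SL}_2(\mathbb{Z}) \twoheadrightarrow \operatorname{SL}_2(\mathbb{Z}/2\mathbb{Z}) \cong \mathrm{S}_3 \twoheadrightarrow \mathbb{Z}/2\mathbb{Z}$, and checking that the image in $\mathrm{S}_3$ really does contain odd permutations rather than landing in the index-two subgroup $\mathrm{A}_3$ — which the explicit evaluation at $\rho_{12}$ settles. (If one prefers, the same $\psi$ can be described directly as the action of $\operatorname{SAut}(F_2)$ on $H_1(F_2;\mathbb{Z}/2\mathbb{Z})\setminus\{0\}$ followed by sign, bypassing any mention of $\operatorname{GL}_2(\mathbb{Z})$; I would likely mention this as an aside.)
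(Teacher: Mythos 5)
Your proof is correct, but it reaches the nontrivial homomorphism $\operatorname{SAut}(F_2) \to \mathbb{Z}/2\mathbb{Z}$ by a genuinely different route from the paper. The paper first invokes Nielsen's theorem that the canonical map $\operatorname{Aut}(F_2) \to \operatorname{GL}_2(\mathbb{Z})$ is an isomorphism, and then quotes an explicit character $\chi \colon \operatorname{SL}_2(\mathbb{Z}) \to \mathbb{C}_{12}$ from Conrad's notes (coming from the theory of modular forms), composing it with $z \mapsto z^6$ to land in $\mathbb{C}_2$. You instead reduce modulo $2$: $\operatorname{SAut}(F_2) \twoheadrightarrow \operatorname{SL}_2(\mathbb{Z}) \twoheadrightarrow \operatorname{SL}_2(\mathbb{Z}/2\mathbb{Z}) \cong \mathrm{S}_3 \twoheadrightarrow \mathbb{Z}/2\mathbb{Z}$, and check surjectivity by evaluating at $\rho_{12}$, whose image mod $2$ is an involution and hence a transposition of the three nonzero vectors. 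Since the abelianization of $\operatorname{SL}_2(\mathbb{Z})$ is $\mathbb{Z}/12\mathbb{Z}$, which has a unique quotient of order $2$, the two constructions actually produce the same homomorphism. Your version is more elementary and self-contained: it needs only surjectivity of the canonical map (not Nielsen's isomorphism) and avoids taking the complicated formula for $\chi$ on faith, at the cost of not exhibiting the full abelianization. Both correctly handle the (trivial) lower bound that any action on at most one point is trivial.
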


\begin{proof}
    For $n = 2$, Nielsen \cite{Nie} showed the canonical map $\operatorname{Aut}(F_n) \twoheadrightarrow \operatorname{GL}_n(\mathbb{Z})$ is in fact an isomorphism and hence $\operatorname{SAut}(F_2)$ is isomorphic to $\operatorname{SL}_2(\mathbb{Z})$, a very well-understood group with many interesting properties.
    
    \newcommand{\abcd}{\begin{pmatrix} a & b\\ c & d \end{pmatrix}}
    
    We take an example from Conrad's investigation of $\operatorname{SL}_2(\mathbb{Z})$ \cite[Example 2.5]{Con}, which draws a connection to the theory of modular forms. Denoting the $n^{\mathrm{th}}$ roots of unity by $\mathbb{C}_{n}$, we construct an action of $\operatorname{SAut}(F_2)$ on a set of two elements in two steps. First, consider the epimorphism $\chi \colon \operatorname{SL}_2(\mathbb{Z}) \to \mathbb{C}_{12}$ given by
    \[ \chi \colon \abcd \mapsto \operatorname{exp}\left(\dfrac{2\pi i}{12} ((1-c^2)(bd+3(c-1)d+c+3)+c(a+d-3))\right).\]
    Second, compose $\chi$ with the map $\psi \colon \mathbb{C}_{12} \twoheadrightarrow \mathbb{C}_{2}$ given by
    \[ \psi \colon z \mapsto z^6 \]
    to get a nontrivial homomorphism $\operatorname{SAut}(F_2) \cong \operatorname{SL}_2(\mathbb{Z}) \twoheadrightarrow \mathbb{C}_{2} \cong \mathbb{Z}/2\mathbb{Z}$.
\end{proof}

\begin{prop}$(n \in \{3,4\})$.
    \label{n=3,4}
    The smallest nontrivial actions of $\operatorname{SAut}(F_3)$ and $\operatorname{SAut}(F_4)$ are on sets of $7$ and $8$ elements respectively.
\end{prop}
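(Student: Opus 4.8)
The claim has two halves: an upper bound (exhibiting nontrivial actions of the stated sizes) and a matching lower bound (ruling out smaller ones). For the upper bound, I would exploit the subgroup structure developed in Section~\ref{subgroups}. Recall the composite $\operatorname{SAut}(F_n) \to \operatorname{SL}_n(\mathbb{Z}) \to \operatorname{SL}_n(\mathbb{Z}/2\mathbb{Z}) = \operatorname{PSL}_n(\mathbb{Z}/2\mathbb{Z})$; for $n=3$ this lands in $\operatorname{PSL}_3(\mathbb{F}_2)$, a simple group of order $168$ whose smallest faithful permutation action is on the $7$ points (or $7$ lines) of the Fano plane, giving a nontrivial action of $\operatorname{SAut}(F_3)$ on $7$ points. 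For $n=4$, $\operatorname{SL}_4(\mathbb{F}_2) \cong \mathrm{A}_8$, which acts on $8$ points; composing gives a nontrivial action of $\operatorname{SAut}(F_4)$ on $8$ points. (Alternatively one can feed these into the general fact, cited from \cite{BKP}, that $\operatorname{SAut}(F_n)$ acts nontrivially on $2^n-1$ points.)

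For the lower bound I would argue by contradiction: suppose $\operatorname{SAut}(F_n)$ acts nontrivially on a set $\mathrm{X}$ with $|\mathrm{X}| < 7$ (resp. $< 8$). Since $\operatorname{SAut}(F_n)$ is perfect for $n \geqslant 3$ (noted after $(\mathrm{r}_4)$), the image in $\operatorname{Sym}(\mathrm{X})$ lies in the alternating group, and any nontrivial action gives a nontrivial — hence, as the image is perfect and small, I would want to show it must in fact be faithful on some orbit or at least have a nonabelian simple quotient. The cleanest route is to show that every proper quotient of $\operatorname{SAut}(F_n)$ that embeds in a small symmetric group is impossible, by tracking the images of the finite subgroups $\mathrm{A}_{n+1}$ and $\mathrm{D}_n'$ from Section~\ref{subgroups}. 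Concretely: $\mathrm{A}_4 \leqslant \operatorname{SAut}(F_3)$ and $\mathrm{A}_5 \leqslant \operatorname{SAut}(F_4)$, and $\mathrm{A}_5$ has no nontrivial permutation representation on fewer than $5$ points, while the relevant containments force incompatible constraints below $7$ (resp. $8$). One then checks the handful of transitive actions of $\mathrm{A}_k$ on small sets and shows the transvection relations $(\mathrm{r}_1)$–$(\mathrm{r}_4)$ cannot all be satisfied by the candidate images.

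Since this lower-bound bookkeeping is exactly the content of \cite[Theorem 3.16]{BKP} for $n \in \{3,4\}$ — where the bounds $7$ and $8$ are stated to be sharp — the honest and economical thing is to cite it: the lower bounds of $7$ and $8$ are established there, and the matching upper bounds are the Fano-plane and $\mathrm{A}_8$ actions above (equivalently the $2^n-1$ construction), so together they pin the smallest nontrivial actions at $7$ and $8$ respectively.

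\textbf{Main obstacle.} The genuine difficulty is entirely in the lower bound: proving that no set smaller than $7$ (resp. $8$) admits a nontrivial action requires understanding all small quotients of $\operatorname{SAut}(F_n)$, which is delicate precisely because $\operatorname{SAut}(F_n)$ is not known to be generated by its obvious finite subgroups in a way that makes small representations transparent. In this paper, however, that work is imported wholesale from \cite{BKP}, so the remaining task reduces to the routine verification that the Fano-plane action (for $n=3$) and an $\mathrm{A}_8$-action (for $n=4$) are genuinely nontrivial and factor through $\operatorname{SAut}(F_n)$ — which follows immediately from the canonical map to $\operatorname{SL}_n(\mathbb{Z}/2\mathbb{Z})$ being surjective and nontrivial.
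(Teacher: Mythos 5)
Your proposal is correct and in substance matches the paper, whose entire proof is a citation to \cite{BKP} (the paper points to Lemmas 3.1 and 3.3 there, you to Theorem 3.16, but these cover the same sharp bounds for $n \in \{3,4\}$). The extra detail you supply for the upper bounds --- the surjection onto $\operatorname{PSL}_3(\mathbb{F}_2)$ acting on the Fano plane and onto $\operatorname{SL}_4(\mathbb{F}_2) \cong \mathrm{A}_8$ acting on $8$ points --- is accurate and consistent with the $2^n-1$ construction the paper mentions in its introduction, but is not something the paper spells out in this proof.
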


\begin{proof}
    This result is a direct consequence of \cite[Lemmas 3.1 and 3.3]{BKP}.
\end{proof}

We now turn to some results that form the foundations upon which the computational aspect of the paper relies.

\begin{lem}
    \label{homomorphisms}
    Given a homomorphism $\alpha \colon \mathrm{D}_n' \to \mathrm{S}_m$ and some $\tau \in \mathrm{S}_m$, there is at most one homomorphism $\psi \colon \operatorname{SAut}(F_n) \to \mathrm{S}_m$ satisfying $\alpha = \psi|_{\mathrm{D}_n'}$ and $\tau = \psi(\rho_{12})$.
\end{lem}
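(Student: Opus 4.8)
The plan is to show that $\mathrm{D}_n'$ together with the single element $\rho_{12}$ generates all of $\operatorname{SAut}(F_n)$; once that is established, a homomorphism out of $\operatorname{SAut}(F_n)$ is determined by its values on $\mathrm{D}_n'$ and on $\rho_{12}$, which gives the uniqueness claim immediately.

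So the first step is to recall what $\mathrm{D}_n'$ contains. By the discussion in Section~\ref{subgroups}, $\mathrm{D}_n' = (\mathbb{Z}/2\mathbb{Z})^{n-1} \rtimes \mathrm{A}_n$, where $\mathrm{A}_n$ is generated by the permutation automorphisms and $(\mathbb{Z}/2\mathbb{Z})^{n-1}$ is generated by the products $\varepsilon_i\varepsilon_j$. I want to leverage Gersten's presentation: $\operatorname{SAut}(F_n)$ is generated by all the transvections $\rho_{ij}$ and $\lambda_{ij}$, so it suffices to produce every $\rho_{ij}$ and every $\lambda_{ij}$ inside the subgroup $H := \langle \mathrm{D}_n', \rho_{12}\rangle$. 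Since $\mathrm{A}_n \leqslant H$ acts on the index set $\mathrm{N}$ by even permutations, conjugating $\rho_{12}$ by elements of $\mathrm{A}_n$ yields $\rho_{ij}$ for every pair $(i,j)$ that lies in the same $\mathrm{A}_n$-orbit as $(1,2)$ — and for $n \geqslant 3$, $\mathrm{A}_n$ acts transitively on ordered pairs of distinct indices, so we get \emph{all} $\rho_{ij}$ this way. (The case $n = 2$ is excluded since Gersten's presentation and the hypotheses here implicitly require $n \geqslant 3$; in any event the lemma's interest is for $n \geqslant 5$.)

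It then remains to obtain the left transvections $\lambda_{ij}$ from the $\rho_{ij}$'s and $\mathrm{D}_n'$. The cleanest route is the identity already flagged in the paper: $\lambda_{ij}\lambda_{ji}^{-1}\rho_{ij} = \varepsilon_i\sigma_{ij}$. The element $\varepsilon_i\sigma_{ij}$ is not itself in $\mathrm{D}_n'$ (it has the wrong image under the canonical map — determinant considerations aside, $\sigma_{ij}$ is an odd permutation), but a suitable product such as $(\varepsilon_i\sigma_{ij})(\varepsilon_k\sigma_{kj})$ or $\varepsilon_i\varepsilon_j \cdot (\text{a product of two }\sigma\text{'s})$ does land in $\mathrm{D}_n'$. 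A more robust alternative is to use the "swap" automorphism: there is an element of $\operatorname{Aut}(F_n)$ (conjugation-type, or an explicit product of $\sigma$'s and $\varepsilon$'s lying in $\mathrm{D}_n'$ after parity correction) conjugating each $\rho_{ij}$ to $\lambda_{ij}$. Concretely, one checks that conjugating $\rho_{ij}$ by the permutation-plus-sign element that reverses the roles of left and right multiplication produces $\lambda_{ij}$; the relevant element can be assembled from $\mathrm{S}_n$ and the $\varepsilon$'s, and then if it has the wrong determinant one multiplies by a transvection already shown to lie in $H$ to fix the parity. Either way, one concludes $\lambda_{ij} \in H$ for all $i \neq j$, whence $H = \operatorname{SAut}(F_n)$.

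The main obstacle is the last step: extracting the $\lambda_{ij}$ cleanly. The hypotheses only give us $\rho_{12}$ (a \emph{right} transvection) and $\mathrm{D}_n'$, and $\mathrm{D}_n'$ is "symmetric" in a way that does not obviously distinguish left from right, so one has to be careful that the element used to convert $\rho$'s to $\lambda$'s genuinely lies in the subgroup generated by what we are given, and not merely in $\operatorname{Aut}(F_n)$. Verifying the relevant commutator or conjugation identity — e.g. that some explicit word $w$ in the $\sigma_{ij}$ and $\varepsilon_i$ satisfies $w\rho_{ij}w^{-1} = \lambda_{ij}$, or deriving $\lambda_{12}$ from $\varepsilon_1\sigma_{12}$, $\rho_{12}$ and relation $(\mathrm{r}_4)$ — is the one genuinely computational point, and it is where I would spend the most care. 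Once $H = \operatorname{SAut}(F_n)$, the lemma follows: if $\psi_1,\psi_2$ both restrict to $\alpha$ on $\mathrm{D}_n'$ and both send $\rho_{12}$ to $\tau$, they agree on a generating set of $\operatorname{SAut}(F_n)$ and hence coincide.
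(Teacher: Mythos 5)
Your overall strategy coincides with the paper's: show that $\mathrm{D}_n'$ together with $\rho_{12}$ forces the images of all transvections, hence of all of $\operatorname{SAut}(F_n)$. The first half — recovering every $\rho_{ij}$ by conjugating $\rho_{12}$ by elements of $\mathrm{A}_n \leqslant \mathrm{D}_n'$ — is exactly what the paper does. But the step you yourself flag as ``the main obstacle'', producing the left transvections, is where the entire content of the proof lies, and you leave it unresolved. Neither of your suggested routes works as stated: the identity $\lambda_{ij}\lambda_{ji}^{-1}\rho_{ij} = \varepsilon_i\sigma_{ij}$ only ever hands you the \emph{product} $\lambda_{ij}\lambda_{ji}^{-1}$, never $\lambda_{ij}$ on its own, even if you could manufacture $\varepsilon_i\sigma_{ij}$ from the given data (and $\varepsilon_i\sigma_{ij}$ lies in $\mathrm{K}$ but not in $\mathrm{D}_n'$); and the ``permutation-plus-sign element that reverses left and right multiplication'' is never exhibited, so the argument as written does not close.

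The missing ingredient is a single identity, which is precisely what the paper invokes from \cite[Lemma 2.3]{BKP} (after Bridson--Vogtmann): conjugating $\rho_{ij}$ by $\varepsilon_i\varepsilon_j$ gives $\lambda_{ij}$. One checks directly that $(\varepsilon_i\varepsilon_j)\circ\rho_{ij}\circ(\varepsilon_i\varepsilon_j)$ sends $a_i \mapsto \varepsilon_i\varepsilon_j\bigl(a_j^{-1}a_i^{-1}\bigr) = a_ja_i$ and fixes all other basis elements, which is $\lambda_{ij}$. Crucially, $\varepsilon_i\varepsilon_j$ already lies in $(\mathbb{Z}/2\mathbb{Z})^{n-1} \leqslant \mathrm{D}_n'$, so no parity correction is needed and your worry about $\mathrm{D}_n'$ not distinguishing left from right evaporates: the sign element alone does the conversion. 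With that identity inserted, your argument is complete and is the paper's proof. One smaller inaccuracy: $\mathrm{A}_3$ is \emph{not} transitive on ordered pairs of distinct indices (it has two orbits of size three), so your parenthetical claim holds only for $n \geqslant 4$; for $n = 3$ one would reach the remaining right transvections via relation $(\mathrm{r}_2)$ or via the $\varepsilon_i\varepsilon_j$-conjugation. Since the lemma is applied at $n = 5$, this is cosmetic.
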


Equivalently, any homomorphism $\psi \colon \operatorname{SAut}(F_n) \to \mathrm{S}_m$ is completely determined by the restriction $\psi|_{\mathrm{D}_n'}$ and the image $\psi(\rho_{12})$.

\begin{proof}
    Crucial observations from \cite[Lemma 2.3]{BKP}, originally due to Bridson--Vogtmann \cite{BV}, tell us that all transvections are conjugate in $\operatorname{SAut}(F_n)$. In particular, conjugating on the right and given distinct $i,j \in \mathrm{N}$ we have
    \[ \rho_{ij}^{\varepsilon_i\varepsilon_j} = \lambda_{ij}. \]
    We know $\mathrm{A}_n$ acts transitively on the pairs $(i,j)$, hence given $\sigma \in$ $\mathrm{A}_n$ (and excusing a slight abuse of notation) we also have
    \[ \rho_{ij}^{\sigma} = \rho_{\sigma(i)\sigma(j)}. \]
    As $\psi$ is a homomorphism, these conjugacies also hold in $\operatorname{im}(\psi)$. Hence, since $\varepsilon_i\varepsilon_j, \sigma \in$ $\mathrm{D}_n'$, we can generate the images of all transvections given $\psi(\rho_{12})$ and the images of elements in $\mathrm{D}_n'$. This concludes the proof as the transvections generate $\operatorname{SAut}(F_n)$.
\end{proof}

Recall from Gersten's presentation that $\operatorname{SAut}(F_n)$ is perfect for $n \geqslant 3$. Hence the image of $\operatorname{SAut}(F_n)$ under $\psi$ must sit inside $\mathrm{A}_m \leqslant \mathrm{S}_m$.

\begin{lem}
    \label{centraliser}
    Let $n \geqslant 5$ and let $\mathrm{S}$ be the subgroup of $\mathrm{D}_n'$ consisting of all elements of $\mathrm{D}_n'$ that fix $a_1$ and $a_2$. Denoting the set of all elements in $G$ that commute with some $x$ by $C_G(x)$, we have $\mathrm{S} = C_{\mathrm{D}_n'}(\rho_{12})$.
\end{lem}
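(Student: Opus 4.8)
The plan is to work with the concrete description of $\mathrm{D}_n'$ as a group of signed permutations of the chosen basis $\mathrm{A}$. Since $\mathrm{D}_n' \leqslant \mathrm{B}_n$, every $g \in \mathrm{D}_n'$ acts by $g(a_i) = a_{\tau(i)}^{\eta_i}$ for a permutation $\tau \in \mathrm{A}_n$ and signs $\eta_i \in \{\pm 1\}$ with $\prod_i \eta_i = 1$, and $g$ fixes $a_i$ precisely when $\tau(i) = i$ and $\eta_i = 1$. So the claim $\mathrm{S} = C_{\mathrm{D}_n'}(\rho_{12})$ amounts to showing that $g$ commutes with $\rho_{12}$ if and only if $\tau(1) = 1$, $\tau(2) = 2$ and $\eta_1 = \eta_2 = 1$. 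The inclusion $\mathrm{S} \subseteq C_{\mathrm{D}_n'}(\rho_{12})$ is the easy direction: if $g$ fixes $a_1$ and $a_2$ then so does $g^{-1}$, and since $\rho_{12}$ alters only $a_1$, a one-line check on each basis element gives $g\rho_{12}g^{-1} = \rho_{12}$ — on $a_1$ we get $g(a_1 a_2) = a_1 a_2$, and on $a_k$ with $k \neq 1$ we have $g^{-1}(a_k) = a_m^{\pm 1}$ with $m \neq 1$, which $\rho_{12}$ fixes, so the conjugate returns $a_k$.

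For the reverse inclusion I would compute $h := g\rho_{12}g^{-1}$ on the basis for an arbitrary such $g$. Writing $g^{-1}(a_j) = a_{\tau^{-1}(j)}^{\eta_{\tau^{-1}(j)}}$, one finds that $h$ fixes $a_j$ for every $j \neq \tau(1)$, while
\[ h(a_{\tau(1)}) = \begin{cases} a_{\tau(1)}\, a_{\tau(2)}^{\eta_2} & \text{if } \eta_1 = 1, \\ a_{\tau(2)}^{-\eta_2}\, a_{\tau(1)} & \text{if } \eta_1 = -1. \end{cases} \]
Since $\tau(1) \neq \tau(2)$, in both cases $h(a_{\tau(1)})$ is a reduced word of length $2$, so $a_{\tau(1)}$ is the unique basis element moved by $h$. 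As $a_1$ is the unique basis element moved by $\rho_{12}$, the equation $h = \rho_{12}$ forces $\tau(1) = 1$. With $\tau(1) = 1$ in hand, comparing $h(a_1)$ with $a_1 a_2$ as reduced words rules out $\eta_1 = -1$ (the word $a_{\tau(2)}^{-\eta_2} a_1$ begins with $a_{\tau(2)}^{\pm 1}$, and $\tau(2) \neq 1$), and when $\eta_1 = 1$ it gives $\tau(2) = 2$ and $\eta_2 = 1$. Hence $g \in \mathrm{S}$, as required.

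There is no genuinely hard step here; the only point demanding care is fixing conventions — the order in which the $\varepsilon_i$ and $\sigma_{ij}$ compose inside $\mathrm{B}_n$, and the resulting formula for $g^{-1}$ — after which the two reduced-word comparisons (length, then initial letter) do all the work. I would also remark that the argument in fact uses only that $\rho_{12}$ and $\mathrm{D}_n'$ are defined, so it goes through for all $n \geqslant 2$; the hypothesis $n \geqslant 5$ is imposed merely because that is the range in which the lemma is subsequently applied.
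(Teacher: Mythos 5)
Your proof is correct and takes essentially the same route as the paper's: both inclusions are verified by direct computation on the basis of $F_n$, with the reverse inclusion coming down to a comparison of reduced words, and your signed-permutation parametrisation of $\mathrm{D}_n'$ simply packages the paper's case analysis on $g(a_1)$ and $g(a_2)$ more uniformly. Your closing remark is also accurate: the paper's argument likewise uses nothing beyond the definitions, so the hypothesis $n \geqslant 5$ plays no role in this lemma.
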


\begin{proof}
    Let $\sigma \in \mathrm{S}$ and let $w$ be a fixed word $w = l_1l_2 \dots l_k$ in $F_n$. Recall both $\sigma$ and $\rho_{12}$ are automorphisms of $F_n$ so we have
    \[ \sigma\circ\rho_{12}(w) = \prod_{i=1}^k (\sigma\circ\rho_{12}(l_i)). \]
    For each $1 \leqslant i \leqslant k$, if $l_i \in \{a_1^{\pm1},a_2^{\pm1}\}$ then  $\sigma\circ\rho_{12}(l_i) = \rho_{12}(l_i) = \rho_{12}\circ\sigma(l_i)$. Otherwise, if $l_i \in \{a_3^{\pm1},\dots,a_n^{\pm1}\}$ then $\sigma\circ\rho_{12}(l_i) = \sigma(l_i)$. Note that $\sigma \in \mathrm{D}_n'$ so there exist unique $\tau \in \mathrm{A}_n$, $\xi \in (\mathbb{Z}/2\mathbb{Z})^{n-1}$ such that $\sigma = \tau\circ\xi$.
    
    Further, $\xi \colon \{a_3^{\pm1},\dots,a_n^{\pm1}\} \to \{a_3^{\pm1},\dots,a_n^{\pm1}\}$, so $\sigma(l_i) \in \{a_3^{\pm1},\dots,a_n^{\pm1}\}$ if and only if $\tau(l_i)$ lies in the set $\{a_3^{\pm1},\dots,a_n^{\pm1}\}$. Suppose it does not, then $\sigma^{-1}$ does not fix one of $a_1^{\pm1},a_2^{\pm1}$, which is a contradiction as $\mathrm{S}$ is a subgroup. Hence, $\sigma\circ\rho_{12}(l_i) = \sigma(l_i) = \rho_{12}\circ\sigma(l_i)$ and therefore $\sigma\circ\rho_{12}(w) = \rho_{12}\circ\sigma(w)$. This shows $\mathrm{S} \subseteq C_{\mathrm{D}_n'}(\rho_{12})$.
    
    Now let $g \in C_{\mathrm{D}_n'}(\rho_{12})$ and suppose $g(a_1) \not= a_1$, so we have
    \[ g\circ\rho_{12}(a_1) = g(a_1a_2) = g(a_1)g(a_2) = \rho_{12}\circ g(a_1).\]
    If $g(a_1) = a_1^{-1}$ then $a_1^{-1}g(a_2) = a_2^{-1}a_1^{-1}$, which contradicts $F_n$ being free as $g$ is a permutation of $\{a_1^{\pm1},a_2^{\pm1},\dots,a_n^{\pm1}\}$. Hence $g(a_1) \not= a_1^{\pm1}$, so $\rho_{12}\circ g(a_1) = g(a_1)$, which forces $g(a_2)$ to be trivial, contradicting $g$ being injective.
    
    Alternatively, if $g(a_1) = a_1$ and $g(a_2) \not= a_2$ then $g\circ\rho_{12}(a_1) = g(a_1a_2) = g(a_1)g(a_2) = a_1g(a_2)$, while $\rho_{12}\circ g(a_1) = \rho_{12}(a_1) = a_1a_2$, forcing $g(a_2) = a_2$, which is a clear contradiction. This shows $C_{\mathrm{D}_n'}(\rho_{12}) \subseteq \mathrm{S}$, which concludes the proof.
\end{proof}

\begin{cor}
    \label{imrho12}
    If $\psi \colon \operatorname{SAut}(F_n) \to \mathrm{A}_m$ is a homomorphism, then $\psi(\rho_{12})$ must commute with every element in $\psi(\mathrm{S})$ in the image, that is $\psi(\rho_{12}) \in C_{\mathrm{A}_m}(\psi(\mathrm{S}))$.
\end{cor}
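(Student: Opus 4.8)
The plan is to derive this directly from Lemma \ref{centraliser}, since the corollary is essentially just "apply $\psi$ to a commuting relation." First I would recall that $\mathrm{S} \leqslant \mathrm{D}_n' \leqslant \operatorname{SAut}(F_n)$, so every element of $\mathrm{S}$ is a genuine element of $\operatorname{SAut}(F_n)$ on which $\psi$ is defined. By Lemma \ref{centraliser} we have $\mathrm{S} = C_{\mathrm{D}_n'}(\rho_{12})$, which in particular means that every $s \in \mathrm{S}$ satisfies $s\rho_{12} = \rho_{12}s$ inside $\operatorname{SAut}(F_n)$ (the lemma gives the stronger statement that $\mathrm{S}$ is precisely the centraliser within $\mathrm{D}_n'$, but for the corollary we only need one containment, namely $\mathrm{S} \subseteq C_{\operatorname{SAut}(F_n)}(\rho_{12})$).

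Next I would simply apply the homomorphism $\psi$: from $s\rho_{12} = \rho_{12}s$ we obtain $\psi(s)\psi(\rho_{12}) = \psi(\rho_{12})\psi(s)$ for every $s \in \mathrm{S}$, so $\psi(\rho_{12})$ commutes with $\psi(s)$ for all $s \in \mathrm{S}$, i.e. $\psi(\rho_{12}) \in C_{\mathrm{A}_m}(\psi(\mathrm{S}))$. There is no real obstacle here — homomorphisms preserve commutation, and the only thing to be careful about is that $\mathrm{S}$ really does sit inside the domain of $\psi$, which is immediate from the construction of $\mathrm{D}_n'$ in Section \ref{subgroups}. (The content of the corollary is entirely in Lemma \ref{centraliser}; this statement just repackages it as a constraint on $\psi(\rho_{12})$ that will be exploited in the computational search.)
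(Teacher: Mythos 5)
Your proposal is correct and matches the paper's (implicit) reasoning exactly: the paper states this as an immediate corollary of Lemma \ref{centraliser}, relying on precisely the observation that homomorphisms preserve commutation, so $\mathrm{S} \subseteq C_{\operatorname{SAut}(F_n)}(\rho_{12})$ gives $\psi(\mathrm{S}) \subseteq C_{\mathrm{A}_m}(\psi(\rho_{12}))$. Nothing is missing.
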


\begin{defn}
    \label{classes}
    We say that two homomorphisms from a group $G$ to a permutation group $H$ are \textit{related} if they are equal up to conjugacy of $H$. We hence define the \textit{homomorphism classes} between $G$ and $H$ as the equivalence classes under relation.
\end{defn}

\begin{rem}
    \label{agree}
    Let $\psi$ be a homomorphism with domain $\operatorname{SAut}(F_n)$. If $\alpha = \psi|_{\mathrm{D}_n'}$ and $\beta = \psi|_{A_{n+1}}$, then $\alpha$ and $\beta$ agree when restricted to $\mathrm{A}_n$, that is $\alpha|_{\mathrm{A}_n} = \beta|_{\mathrm{A}_n}$. 
\end{rem}

Consequently, given a homomorphism $\alpha \colon \mathrm{D}_n' \to \mathrm{A}_m$, if there is no homomorphism class representative $\beta \colon \mathrm{A}_{n+1} \to \mathrm{A}_m$ such that $\alpha(\mathrm{A}_n)$ and $\beta(\mathrm{A}_n)$ are conjugate in $\mathrm{A}_m$, then $\alpha$ cannot be extended to a homomorphism with domain $\operatorname{SAut}(F_n)$. Note that this is a weaker condition than the one in \hyperref[agree]{Remark 3.8}, but we use this in our proof of \hyperref[n=5]{Theorem 3.10} as it is significantly faster to compute.

\begin{lem}
    \label{monomorphisms}
    Let $\psi \colon \operatorname{SAut}(F_5) \to \mathrm{A}_m$, where $m < 31$ and define $\alpha = \psi|_{\mathrm{D}_5'}$ and $\beta = \psi|_{\mathrm{A}_6}$. If $\alpha$ or $\beta$ is not injective, then $\psi$ is trivial.
\end{lem}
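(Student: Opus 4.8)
The plan is to treat the two cases — $\beta$ not injective and $\alpha$ not injective — separately, exploiting in each case that the relevant subgroup has very few normal subgroups, and reducing the second case to the known minimal permutation degree of $\operatorname{PSL}_5(\mathbb{F}_2)$. Suppose first that $\beta=\psi|_{\mathrm{A}_6}$ is not injective. Since $\mathrm{A}_6$ is simple, $\psi(\mathrm{A}_6)=1$, so in particular $\psi(\mathrm{A}_5)=1$, where $\mathrm{A}_5\leqslant\mathrm{A}_6$ is the stabiliser of the index $6$ (the same copy of $\mathrm{A}_5$ that sits inside $\mathrm{D}_5'$). From the proof of Lemma~\ref{homomorphisms} we have $\rho_{ij}^{\sigma}=\rho_{\sigma(i)\sigma(j)}$ for all $\sigma\in\mathrm{A}_5$, and $\mathrm{A}_5$ is transitive on ordered pairs of distinct elements of $\mathrm{N}$; applying $\psi$ and using $\psi(\sigma)=1$ gives $\psi(\rho_{ij})=\psi(\rho_{12})$ for all distinct $i,j$. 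Then Gersten's relation $(\mathrm{r}_2)$, namely $[\rho_{12}^{-1},\rho_{23}^{-1}]=\rho_{13}^{-1}$, forces $\psi(\rho_{13})^{-1}=[\psi(\rho_{12})^{-1},\psi(\rho_{12})^{-1}]=1$, whence $\psi(\rho_{12})=1$; as the transvections generate $\operatorname{SAut}(F_5)$, $\psi$ is trivial.

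Now suppose $\alpha=\psi|_{\mathrm{D}_5'}$ is not injective; I would first show $\psi(\varepsilon_i\varepsilon_j)=1$ for all $i,j$. The subgroup $(\mathbb{Z}/2\mathbb{Z})^{4}=\langle\varepsilon_i\varepsilon_j\rangle$ is, as an $\mathbb{F}_2[\mathrm{A}_5]$-module, the sum-zero submodule of the natural permutation module $\mathbb{F}_2^5$; this is the $4$-dimensional irreducible module (well known to be irreducible, e.g.\ via $\mathrm{A}_5\cong\operatorname{SL}_2(\mathbb{F}_4)$), and $\mathrm{A}_5$ acts on it faithfully. Since $\ker\alpha$ is a non-trivial normal subgroup of $\mathrm{D}_5'$, the intersection $\ker\alpha\cap(\mathbb{Z}/2\mathbb{Z})^4$ is an $\mathrm{A}_5$-submodule; were it zero then $[\ker\alpha,(\mathbb{Z}/2\mathbb{Z})^4]$ would be trivial, so $\ker\alpha\leqslant C_{\mathrm{D}_5'}((\mathbb{Z}/2\mathbb{Z})^4)=(\mathbb{Z}/2\mathbb{Z})^4$ by faithfulness, giving $\ker\alpha=1$, a contradiction; hence by irreducibility $\ker\alpha\cap(\mathbb{Z}/2\mathbb{Z})^4=(\mathbb{Z}/2\mathbb{Z})^4$, as claimed.

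Next I would show $\psi$ annihilates $\ker\phi$ (the Torelli subgroup, which lies inside $\operatorname{SAut}(F_5)$), so that $\psi$ factors through $\operatorname{SAut}(F_5)/\ker\phi\cong\operatorname{SL}_5(\mathbb{Z})$. From $\rho_{ij}^{\varepsilon_i\varepsilon_j}=\lambda_{ij}$ we get $\psi(\lambda_{ij})=\psi(\rho_{ij})$; hence $\psi$ kills each partial conjugation $a_i\mapsto a_ja_ia_j^{-1}$, which equals $\lambda_{ij}\rho_{ij}^{-1}$. Moreover $(\mathrm{r}_3)$ gives $[\rho_{ij},\lambda_{ik}]=1$, so $[\psi(\rho_{ij}),\psi(\rho_{ik})]=1$, which kills each commutator transvection $a_i\mapsto a_i[a_j,a_k]=[\rho_{ij},\rho_{ik}](a_i)$. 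By Magnus's theorem these two families generate $\ker\phi$, so $\psi=\bar\psi\circ\phi$ for a homomorphism $\bar\psi\colon\operatorname{SL}_5(\mathbb{Z})\to\mathrm{A}_m$ with $\bar\psi(\phi(\varepsilon_1\varepsilon_2))=\psi(\varepsilon_1\varepsilon_2)=1$, where $\phi(\varepsilon_1\varepsilon_2)=\operatorname{diag}(-1,-1,1,1,1)$. Now $\operatorname{im}\psi=\operatorname{im}\bar\psi$ is a finite quotient of $\operatorname{SL}_5(\mathbb{Z})$ in which $\operatorname{diag}(-1,-1,1,1,1)$ dies; by the congruence subgroup property the normal closure of this element in $\operatorname{SL}_5(\mathbb{Z})$ is the principal congruence subgroup $\Gamma(2)$ (it lies in $\Gamma(2)$, and it contains every $\phi(\rho_{ij})^2$ — a suitable product of conjugates of $\operatorname{diag}(-1,-1,1,1,1)$ — and such matrices generate $\Gamma(2)$ by Bass--Milnor--Serre), so $\bar\psi$ factors through $\operatorname{SL}_5(\mathbb{Z})/\Gamma(2)\cong\operatorname{PSL}_5(\mathbb{F}_2)$. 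This group is simple, so $\operatorname{im}\psi$ is trivial or isomorphic to it; but $\operatorname{PSL}_5(\mathbb{F}_2)$ has no faithful permutation representation of degree below $2^5-1=31>m$, so $\operatorname{im}\psi=1$ and $\psi$ is trivial.

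The hard part is this last step in the $\alpha$-case: showing that $\psi$ must factor through $\operatorname{SL}_5(\mathbb{Z})$ (which relies on Magnus's generating set for $\ker\phi$ together with the identities $\lambda_{ij}\rho_{ij}^{-1}=(a_i\mapsto a_ja_ia_j^{-1})$ and $[\rho_{ij},\rho_{ik}]=(a_i\mapsto a_i[a_j,a_k])$), and then pinning down the resulting quotient, where the genuine input is the congruence subgroup property and the fact that $\operatorname{diag}(-1,-1,1,1,1)$ normally generates $\Gamma(2)$. By comparison the $\beta$-case and the module-theoretic reduction for $\alpha$ are routine, needing only the simplicity of $\mathrm{A}_5$ and $\mathrm{A}_6$ and the identification of the copies of $\mathrm{A}_5$ involved.
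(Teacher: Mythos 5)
Your proof is correct, but it takes a genuinely different and far more self-contained route than the paper. The paper handles both cases by funnelling them into the non-injectivity of $\alpha$ (the $\beta$ case is reduced via Remark \ref{agree}: $\beta$ trivial forces $\alpha|_{\mathrm{A}_5}$ trivial) and then citing \cite[Lemma 2.5]{BKP} as a black box: a nontrivial element of $\ker\alpha$ has central image, so $\psi$ factors through the simple group $\operatorname{PSL}_5(\mathbb{Z}/2\mathbb{Z})$ or is trivial, and since $31$ divides $|\operatorname{PSL}_5(\mathbb{Z}/2\mathbb{Z})|$ any faithful permutation representation needs degree at least $31 > m$. You instead reprove the content of that cited lemma from scratch: the module-theoretic step upgrading $\ker\alpha \neq 1$ to $\psi(\varepsilon_i\varepsilon_j)=1$ for all $i,j$ (correct --- the sum-zero module is indeed the irreducible $4$-dimensional $\mathbb{F}_2[\mathrm{A}_5]$-module, and your centraliser argument rules out the complementary case), then Magnus's generating set for the Torelli subgroup to factor $\psi$ through $\operatorname{SL}_5(\mathbb{Z})$, then the Bass--Milnor--Serre normal generation of $\Gamma(2)$ by level-$2$ elementary matrices to land in $\operatorname{PSL}_5(\mathbb{F}_2)$. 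Your $\beta$ case is also handled differently and more directly: rather than reducing to the $\alpha$ case, you use transitivity of $\mathrm{A}_5$ on ordered pairs plus Gersten's relation $(\mathrm{r}_2)$ to kill every transvection; this is elementary, uses no hypothesis on $m$, and in fact proves the stronger statement that any $\psi$ vanishing on $\mathrm{A}_5$ is trivial. What the paper's proof buys is brevity, at the price of leaning entirely on \cite{BKP}; what yours buys is independence from that reference, at the price of importing heavier classical machinery (Magnus's theorem and the vanishing of $\operatorname{SK}_1$, i.e.\ the congruence subgroup property for $\operatorname{SL}_5(\mathbb{Z})$).
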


\begin{proof}
    Suppose $\alpha$ is not injective, so that $\operatorname{ker}(\alpha)$ is nontrivial. There exists some nontrivial $\xi \in \operatorname{ker}(\alpha)$ such that $\psi(\xi)$ is central in $\operatorname{im}(\psi)$. By \cite[Lemma 2.5]{BKP}, we know that $\psi$ factors through $\operatorname{PSL_5}(\mathbb{Z}/2\mathbb{Z})$ or is trivial, in which case we are done.
    
    It is a fact that $\operatorname{PSL_5}(\mathbb{Z}/2\mathbb{Z})$ is simple and has order that is divisible by 31. Hence $m \geqslant 31$ whenever $\psi$ is nontrivial, so the claim follows when $\alpha$ is not injective.
    
    Suppose $\beta$ is not injective. Since $\mathrm{A}_6$ is simple, this implies that $\beta$ is trivial. However, this forces $\alpha$ to be trivial on $\mathrm{A}_5$ by \hyperref[agree]{Remark 3.8}, hence making $\alpha$ non-injective, which completes the proof.
\end{proof}

\begin{thm}
    \label{n=5}
    Any action of $\operatorname{SAut}(F_5)$ on a set containing fewer than $18$ elements is trivial.
\end{thm}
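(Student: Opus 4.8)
The proof is the computational exhaustion promised in the introduction, organised around the reductions established above. An action of $\operatorname{SAut}(F_5)$ on a set $\mathrm{X}$ with $|\mathrm{X}| \leqslant 17$ is the same datum as a homomorphism $\psi \colon \operatorname{SAut}(F_5) \to \mathrm{S}_m$ with $m = |\mathrm{X}| \leqslant 17 < 31$, and since $\operatorname{SAut}(F_5)$ is perfect we have $\operatorname{im}(\psi) \leqslant \mathrm{A}_m$. By Lemma~\ref{monomorphisms}, if $\psi$ is nontrivial then $\alpha := \psi|_{\mathrm{D}_5'}$ and $\beta := \psi|_{\mathrm{A}_6}$ are both injective; injectivity of $\beta$ forces $m \geqslant 6$, and injectivity of $\alpha$ forces the order-$960$ group $\mathrm{D}_5'$ to embed in $\mathrm{A}_m$, which rules out several further small values of $m$. (Embedding $\mathrm{A}_m \hookrightarrow \mathrm{A}_{17}$ by fixing the extra points preserves nontriviality, so it would even suffice to treat $m = 17$, but running the search separately for each $m$ is lighter.) Thus the theorem reduces to showing that for every such $m$ no injective $\alpha \colon \mathrm{D}_5' \to \mathrm{A}_m$ extends to a homomorphism on $\operatorname{SAut}(F_5)$.

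For each $m$ the plan is, first, to use computer algebra to list the injective homomorphisms $\alpha \colon \mathrm{D}_5' \to \mathrm{A}_m$ up to conjugacy in $\mathrm{A}_m$ — a finite computation. Before subjecting a candidate $\alpha$ to any relation check I would apply the cheap filter recorded after Remark~\ref{agree}: compute the homomorphism classes of maps $\beta \colon \mathrm{A}_6 \to \mathrm{A}_m$ (each trivial or injective, as $\mathrm{A}_6$ is simple) and discard $\alpha$ unless $\alpha(\mathrm{A}_5)$ is $\mathrm{A}_m$-conjugate to $\beta(\mathrm{A}_5)$ for some such $\beta$. This removes the bulk of the candidates almost for free.

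For each surviving $\alpha$, I would next pin down the image of the one remaining generator. Let $\mathrm{S} \leqslant \mathrm{D}_5'$ be the stabiliser of $a_1$ and $a_2$; by Lemma~\ref{centraliser} this equals $C_{\mathrm{D}_5'}(\rho_{12})$, so by Corollary~\ref{imrho12} the image $\psi(\rho_{12})$ must lie in $C_{\mathrm{A}_m}(\alpha(\mathrm{S}))$. I would enumerate this centraliser and let $\tau$ range over conjugacy representatives of it under the action of $C_{\mathrm{A}_m}(\alpha(\mathrm{D}_5'))$, as the candidate image of $\rho_{12}$. By Lemma~\ref{homomorphisms}, each pair $(\alpha, \tau)$ gives rise to at most one homomorphism $\psi$ with $\psi|_{\mathrm{D}_5'} = \alpha$ and $\psi(\rho_{12}) = \tau$; following the proof of that lemma I would form the putative images of all transvections by conjugating $\tau$ by the elements $\alpha(\varepsilon_i \varepsilon_j)$ and $\alpha(\sigma)$ for $\sigma \in \mathrm{A}_5$, and then test whether the resulting assignment respects Gersten's relations $(\mathrm{r}_1)$--$(\mathrm{r}_4)$ inside $\mathrm{A}_m$. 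If for every $m \leqslant 17$ every pair $(\alpha, \tau)$ fails this test, then no nontrivial $\psi$ exists and the theorem follows.

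The main obstacle is the size of the search: a priori there could be many faithful actions of $\mathrm{D}_5'$ of degree up to $17$, the centralisers $C_{\mathrm{A}_m}(\alpha(\mathrm{S}))$ could be large, and $\mathrm{A}_{17}$ is a very large group to compute in. The three features that keep this feasible are that everything is done up to conjugacy; that the Remark~\ref{agree} filter discards most candidate $\alpha$'s before the expensive step; and that Corollary~\ref{imrho12} replaces a blind search over all of $\mathrm{A}_m$ for $\psi(\rho_{12})$ by a search inside a comparatively tiny subgroup. Once those reductions are in place, verifying Gersten's relations for the handful of surviving pairs is routine.
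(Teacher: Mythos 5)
Your proposal is correct and follows essentially the same strategy as the paper: enumerate homomorphism classes $\alpha \colon \mathrm{D}_5' \to \mathrm{A}_m$, filter via the Remark~\ref{agree} compatibility with $\beta \colon \mathrm{A}_6 \to \mathrm{A}_m$ and the injectivity criterion of Lemma~\ref{monomorphisms}, restrict the candidate $\psi(\rho_{12})$ to $C_{\mathrm{A}_m}(\alpha(\mathrm{S}))$ via Corollary~\ref{imrho12}, and reject each candidate by checking Gersten's relations (the paper finds it suffices to test the single instance $[\psi(\rho_{12})^{-1},\psi(\rho_{23})^{-1}] = \psi(\rho_{13})^{-1}$ of $(\mathrm{r}_2)$, whereas you propose checking all of $(\mathrm{r}_1)$--$(\mathrm{r}_4)$, which is equally valid). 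The only cosmetic difference is that the paper runs the search once in $\mathrm{A}_{17}$ rather than separately for each $m \leqslant 17$.
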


\begin{proof}
    The proof for this Theorem relies entirely on a computer search using \cite{GAP}, the code for which is available at \cite{Spe}. Our strategy mainly revolves around using \hyperref[homomorphisms]{Lemma 3.4}, \hyperref[imrho12]{Corollary 3.6}, \hyperref[agree]{Remark 3.8} and \hyperref[monomorphisms]{Lemma 3.9} to attempt to extend a homomorphism from $\mathrm{D}_5'$ to one of $\operatorname{SAut}(F_5)$. The computation is split into three main phases, which we will denote (\texttt{P$_1$}), (\texttt{P$_2$}) and (\texttt{P$_3$}).
    
    In (\texttt{P$_1$}), we construct the automorphisms defined in \hyperref[subgroups]{Section 2.1} and use them to generate the subgroups $\mathrm{A}_n$, $\mathrm{A}_{n+1}$, $\mathrm{D}_n'$, and $\mathrm{S}$ in $\operatorname{SAut}(F_n)$.
    
    In (\texttt{P$_2$}), we construct the homomorphism classes from $\mathrm{D}_5'$ to $\mathrm{A}_{17}$, and from $\mathrm{A}_6$ to $\mathrm{A}_{17}$. We then use \hyperref[agree]{Remark 3.8} and \hyperref[monomorphisms]{Lemma 3.9} to narrow down the list of potential homomorphisms $\psi$ which we test in (\texttt{P$_3$}).
    
    In (\texttt{P$_3$}), we rely on \hyperref[homomorphisms]{Lemma 3.4} and \hyperref[imrho12]{Corollary 3.6} to test whether $(\mathrm{r}_2)$ from Gersten's presentation holds in the image of the homomorphism $\psi$ being tested. In particular, we test whether the following relation holds
    \[ [\psi(\rho_{12})^{-1} , \psi(\rho_{23})^{-1}] = \psi(\rho_{13})^{-1}. \]
    The program searches through all potential images $\psi(\rho_{12}) \in C_{\mathrm{A}_m}(\psi(\mathrm{S}))$  and shows the only image that passes the test above is trivial, thus proving our claim.
\end{proof}

It is worth noting that when comparing (\texttt{P$_1$}) with (\texttt{P$_2$}) and (\texttt{P$_3$}), the first phase is by far the shortest, requiring very low levels of CPU and memory usage. However, (\texttt{P$_2$}) requires a vast amount of memory with varying levels of CPU usage, while (\texttt{P$_3$}) requires little memory, but is highly CPU-intensive.

The expected runtime of the program is in the ballpark of several hours and required over 64GB of memory to run through and terminate. Carrying out these computations required access to powerful computers made available by the Oxford Mathematical Institute.

\begin{prop}
    \label{increasing}
    Let $n \geqslant 2$ and let $x_n$ be the cardinality of the smallest set on which $\operatorname{SAut}(F_n)$ acts nontrivially. The sequence $(x_n)_{n \geqslant 2}$ is increasing.
\end{prop}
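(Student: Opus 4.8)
The plan is to use the natural embedding $\operatorname{SAut}(F_n)\hookrightarrow\operatorname{SAut}(F_{n+1})$ and to show that a smallest nontrivial action of $\operatorname{SAut}(F_{n+1})$ restricts to a still-nontrivial action of $\operatorname{SAut}(F_n)$ on the same underlying set. This forces $x_n\leqslant x_{n+1}$ for every $n\geqslant2$, which is the assertion.

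First I would make the embedding explicit. An automorphism of $F_n=\langle a_1,\dots,a_n\rangle$ extends to an automorphism of $F_{n+1}$ by decreeing that it fix $a_{n+1}$, and distinct automorphisms of $F_n$ extend to distinct automorphisms of $F_{n+1}$, so this gives an injective homomorphism $\operatorname{Aut}(F_n)\hookrightarrow\operatorname{Aut}(F_{n+1})$. Under the canonical map it sends a matrix $M\in\operatorname{GL}_n(\mathbb{Z})$ to the block-diagonal matrix with blocks $M$ and $(1)$, whose determinant equals $\det M$; hence the embedding restricts to an embedding $\operatorname{SAut}(F_n)\hookrightarrow\operatorname{SAut}(F_{n+1})$, and it carries the transvection $\rho_{12}$ of $F_n$ to the transvection $\rho_{12}$ of $F_{n+1}$.

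The key structural input is that $\operatorname{SAut}(F_m)$ is normally generated by a single transvection whenever $m\geqslant3$: by \cite[Lemma 2.3]{BKP} all transvections of $\operatorname{SAut}(F_m)$ are conjugate, and by \cite{Ger} they generate $\operatorname{SAut}(F_m)$, so the normal closure of $\rho_{12}$ is the whole group. I would apply this with $m=n+1$, which is valid for every $n\geqslant2$. Now let $\operatorname{SAut}(F_{n+1})$ act nontrivially on a set $X$ with $|X|=x_{n+1}$, and let $N\trianglelefteq\operatorname{SAut}(F_{n+1})$ be the kernel, so that $N$ is a proper subgroup. Restricting the action to the embedded copy of $\operatorname{SAut}(F_n)$: were this restriction trivial, we would have $\operatorname{SAut}(F_n)\leqslant N$, hence $\rho_{12}\in N$, and since $N$ is normal and $\rho_{12}$ normally generates $\operatorname{SAut}(F_{n+1})$ this would give $N=\operatorname{SAut}(F_{n+1})$, contradicting nontriviality of the original action. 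Therefore $\operatorname{SAut}(F_n)$ acts nontrivially on $X$, and so $x_n\leqslant|X|=x_{n+1}$; as $n$ was arbitrary, $(x_n)_{n\geqslant2}$ is increasing.

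The one delicate point — and the step I would single out as the crux — is the non-triviality of the restricted action: a priori the embedded subgroup could be contained in the kernel, and it is precisely normal generation of $\operatorname{SAut}(F_{n+1})$ by a transvection (hence the hypothesis $n+1\geqslant3$) that rules this out. If one instead wanted the strict inequality $x_n<x_{n+1}$, the remaining obstacle would be to exclude the case in which $\operatorname{SAut}(F_n)$ already acts transitively on a smallest $\operatorname{SAut}(F_{n+1})$-set; this does not follow from the argument above, but the monotonicity established here is all that the subsequent corollaries require.
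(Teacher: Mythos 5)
Your proof is correct, and its overall skeleton matches the paper's: restrict a smallest nontrivial action of $\operatorname{SAut}(F_{n+1})$ along the standard embedding $\operatorname{SAut}(F_n)\hookrightarrow\operatorname{SAut}(F_{n+1})$ and show the restriction cannot be trivial. Where you diverge is in the justification of that crux. The paper observes that if the restriction $\varphi$ were trivial, some element of $\mathrm{A}_n<\mathrm{D}_n'$ would map to a central (indeed trivial) element of $\operatorname{im}(\psi)$, and then invokes \cite[Lemma 2.5\,(3)]{BKP} to conclude $\psi$ is trivial. You instead argue that $\operatorname{SAut}(F_{n+1})$ is normally generated by the single transvection $\rho_{12}$ (conjugacy of all transvections via \cite[Lemma 2.3]{BKP} plus Gersten's generation result, both already used elsewhere in the paper), so a normal subgroup containing the image of $\rho_{12}\in\operatorname{SAut}(F_n)$ must be everything. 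Your route is somewhat more self-contained, resting only on facts the paper already cites, whereas the paper's citation of \cite[Lemma 2.5\,(3)]{BKP} is a black box here; on the other hand the paper's argument is uniform with its use of the same lemma in Lemma 3.9. Both arguments correctly yield only the weak inequality $x_n\leqslant x_{n+1}$, and you are right that this is all the subsequent corollary needs.
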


\begin{proof}
    We will show this by proving $x_n \leqslant x_{n+1}$ for all $n$. By the sharpness of the results for $n \in \{2,3\}$, we know this holds for the base case $n=2$.
    
    Suppose $\psi \colon \operatorname{SAut}(F_{k+1}) \to \operatorname{Sym}(\mathrm{X})$ is given where $|\mathrm{X}| = x_{k+1}$ and let $\varphi = \psi|_{\operatorname{SAut}(F_k)}$. If $\varphi$ is trivial then pick an element $\xi$ in $\mathrm{A}_n < \mathrm{D}_n'$ so that $\varphi(\xi)$ is central in $\operatorname{im}(\psi)$, hence $\psi$ is trivial by \cite[Lemma 2.5 (3)]{BKP}, which is a  contradiction.
    
    Therefore $\varphi \colon \operatorname{SAut}(F_{k}) \to \operatorname{Sym}(\mathrm{X})$ is nontrivial, so $x_k \leqslant |\mathrm{X}| = x_{k+1}$ and the result follows by induction on $n$.
\end{proof}

\begin{cor}
    \label{n>5}
    Any action of $\operatorname{SAut}(F_n)$ on a set containing fewer than $18$ elements is trivial.
\end{cor}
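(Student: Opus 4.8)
\section*{Proof proposal for Corollary 3.12}

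The plan is to deduce the statement directly from \hyperref[n=5]{Theorem 3.10} and \hyperref[increasing]{Proposition 3.11}, with essentially no additional work; the corollary is really just a packaging of those two results. Write $x_n$ for the cardinality of the smallest set on which $\operatorname{SAut}(F_n)$ acts nontrivially, as in \hyperref[increasing]{Proposition 3.11}. The only subtlety worth flagging is that the statement is implicitly restricted to $n \geqslant 5$ (for $n \leqslant 4$ it is false, e.g.\ $\operatorname{SAut}(F_2)$ acts on two points by \hyperref[n=2]{Proposition 3.2}), so the corollary should be read in that range.

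First I would observe that \hyperref[n=5]{Theorem 3.10} is precisely the assertion that $\operatorname{SAut}(F_5)$ admits no nontrivial action on a set of fewer than $18$ elements, i.e.\ that $x_5 \geqslant 18$. Second, I would invoke \hyperref[increasing]{Proposition 3.11}, which states that the sequence $(x_n)_{n \geqslant 2}$ is increasing; in particular $x_n \geqslant x_5$ for every $n \geqslant 5$. Combining these two facts gives $x_n \geqslant 18$ for all $n \geqslant 5$, which is exactly the claim: any action of $\operatorname{SAut}(F_n)$ on a set with fewer than $18$ elements is necessarily trivial.

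There is no real obstacle here — the proof is a one-line synthesis. If anything, the only thing to be careful about is making the logical direction explicit: \hyperref[increasing]{Proposition 3.11} is proved by showing $x_n \leqslant x_{n+1}$ via restricting an action of $\operatorname{SAut}(F_{n+1})$ to the copy of $\operatorname{SAut}(F_n)$ inside it and using \cite[Lemma 2.5]{BKP} to rule out the restriction being trivial, so the chain $18 \leqslant x_5 \leqslant x_6 \leqslant \cdots \leqslant x_n$ is exactly what we need, and no converse or sharpness statement is required. One could optionally remark that the bound is not claimed to be sharp for $n \geqslant 5$, since the true value of $x_n$ is conjecturally $2^n - 1$ for non-exceptional $n$.
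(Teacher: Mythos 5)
Your proposal is correct and follows exactly the paper's own argument: the corollary is obtained by combining Theorem 3.10 ($x_5 \geqslant 18$) with the monotonicity of $(x_n)$ from Proposition 3.11. Your additional remark that the statement should be read as restricted to $n \geqslant 5$ is a sensible clarification the paper leaves implicit.
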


\begin{proof}
    This is immediate by combining the results from \hyperref[n=5]{Theorem 3.10} and \hyperref[increasing]{Propo-} \hyperref[increasing]{sition 3.11}.
\end{proof}

\section{Closing Remarks}
As pointed out by the author's supervisor Dawid Kielak, one might conjecture that for sufficiently large $n$, the smallest action of $\operatorname{SAut}(F_n)$ is on a set of $m$ elements, where $m$ is the smallest integer such that there exist monomorphisms $\alpha \colon \mathrm{D}_n' \to \mathrm{A}_m$ and $\beta \colon \mathrm{A}_{n+1} \to \mathrm{A}_m$ satisfying $\alpha|_{\mathrm{A}_n} = \beta|_{\mathrm{A}_n}$. The inspiration behind this was the role played by $\mathrm{B}_n$ and $\mathrm{S}_{n+1}$ in rigidity; see \cite{BV}.

We also note that the kernel of the determinant map from $\mathrm{B}_n$ is a slightly larger subgroup, which we will denote by $\mathrm{K} < \operatorname{SAut}(F_n)$. In fact, it is the largest finite subgroup by the Nielsen Realisation Theorem for $\operatorname{Aut}(F_n)$, summarised by \cite{Vog} and originally due to \cite{Cul}, \cite{Zim} and \cite{Khr} independently. We conclude the paper with the following question, inspired by Kielak.

\begin{ques}
    For general $n$, does there exist an integer $m$ and monomorphisms $\beta \colon \mathrm{A}_{n+1}  \hookrightarrow \mathrm{A}_m$ and $\gamma \colon \mathrm{K} \hookrightarrow \mathrm{A}_m$ such that $\beta|_{\mathrm{A}_n} = \gamma|_{\mathrm{A}_n}$, but every action of $\operatorname{SAut}(F_n)$ on a set of $m$ elements is trivial?
\end{ques}

\end{document}